\providecommand{\tabularnewline}{\\}
\newenvironment{svmultproof}{\begin{proof}}{\qed\end{proof}}
\titlerunning{Graphical characterizations of sym. quasi-Cartan matrices} 
\begin{document}
\global\long\def\Z{\mathbb{Z}}
\global\long\def\N{\mathbb{N}}
\global\long\def\T{\mathrm{T}}
\global\long\def\A{\mathbb{A}}
\global\long\def\B{\mathbb{B}}
\global\long\def\C{\mathbb{C}}
\global\long\def\D{\mathbb{D}}
\global\long\def\E{\mathbb{E}}
\global\long\def\Dyn{\mathrm{Dyn}}
\global\long\def\F{\mathbf{F}}
\global\long\def\blocks{\mathcal{B}}
\global\long\def\solid{\mathtt{solid}}
\global\long\def\dotted{\mathtt{dotted}}
\global\long\def\ls{\lambda}
\global\long\def\BT{\mathrm{BT}}
\global\long\def\vertexset{\mathcal{V}}
\global\long\def\R{\mathbb{R}}
\global\long\def\col{\mathrm{col}}
\global\long\def\row{\mathrm{row}}
\global\long\def\Gr{\mathbf{G}}
\global\long\def\Ad{\mathbf{A}}
\global\long\def\dist{d}
\global\long\def\ident{\mathbf{1}}
\global\long\def\K{\mathbf{K}}

\title{Some graph theoretical characterizations of positive definite symmetric
quasi-Cartan matrices%
\thanks{The authors gratefully acknowledge the support of CONACyT (Grant 156667).%
}}

\author{M. Abarca \and D. Rivera}

\institute{M. Abarca \and D. Rivera \at Av. Universidad 1001, Col. Chamilpa
62209, Cuernavaca, Mor. MEX.\\
Tel.: +52-777-329-7020 ext. 3678\\
\email{asma@uaem.mx, darivera@uaem.mx} }

\date{}
\maketitle
\begin{abstract}
We present some graphical characterizations of positive definite symmetric
quasi-Cartan matrices of Dynkin type $\A_{n}$ and $\D_{n}$. Our
proofs are constructive, purely graph theoretical, and almost self-contained
in the sense that they rely on the classical inflations method only.\keywords{Graph connectivity\and Dynkin type\and quasi-Cartan matrix}
\end{abstract}

\section{Introduction\label{sec:Introduction}}

In this paper we consider the symmetric quasi-Cartan matrices. These
are symmetric matrices $A\in\mathcal{M}_{n}\left(\Z\right)$ with
all diagonal entries $A_{i\, i}=2$. If a quasi-Cartan matrix is also
positive definite and all non-diagonal entries $A_{i\, j}<0$ then
it is a (symmetric) Cartan matrix. Two symmetric quasi-Cartan matrices
$A$ and $A^{\prime}$ are said to be $\Z$-equivalent if $A^{\prime}=M^{\T}\, A\, M$
for some matrix $M\in\mathcal{M}_{n}\left(\Z\right)$ with $M^{-1}\in\mathcal{M}_{n}\left(\Z\right)$.

We adapt the graph terminology of \cite{Bondy2008}. Any symmetric
quasi-Cartan matrix $A$ can be depicted as being an adjacency-like
matrix of a graph $G=\left(V\left(G\right),E\left(G\right)\right)$
with vertices $V\left(G\right)=\left\{ 1,\ldots,n\right\} $. For
each pair of $\left\{ i,j\right\} \in\binom{V\left(G\right)}{2}$
with $A_{i\, j}\ne0$ add edges $e_{1},\ldots,e_{\left|A_{i\, j}\right|}$
to $E\left(G\right)$ with endpoints $\psi\left(e_{k}\right)=\left\{ i,j\right\} $,
and a \emph{line style} $\lambda:E\to\left\{ \solid,\dotted\right\} $
given by $\lambda\left(e_{k}\right)=\dotted$ if $A_{i\, j}>0$, and
$\solid$ otherwise. Such graph is called the \emph{bigraph} of $A$
and we will denote it as $\Gr_{A}$. Likewise, we write $\Ad_{G}$
for the quasi-Cartan matrix associated to the bigraph $G$. When restricted
to symmetric Cartan matrices, each connected component of a bigraph
is one of the Dynkin diagrams depicted in Figure \ref{fig:DynkinDiagrams}
(for a reference, see \cite[chap. II sec. 5]{Knapp2002}).

\begin{figure}
\begin{centering}
\begin{tabular}{ll}
Family & Graph\tabularnewline
$\A_{n}$, $n\ge1$ & \begin{tikzpicture}
[baseline=(v1).base, scale = 0.75, line width=0.5pt, every node/.style={circle, draw, inner sep=1pt}]
    \node (v1) at (0, 0) {};
    \node (v2) at (1, 0) {};
	\node (v3) at (4, 0) {};
	\node (v4) at (5, 0) {};
	\node[draw = none] (v5) at (2.5, 0) {$\ldots$};
	\draw (v1) -- (v2) -- (2, 0);
	\draw (3, 0) -- (v3) -- (v4);
\end{tikzpicture}\tabularnewline
$\D_{n}$, $n\ge4$ & \begin{tikzpicture}
[baseline=(v1).base, scale = 0.75, line width=0.5pt, every node/.style={circle, draw, inner sep=1pt}]
    \node (v1) at (0, 0) {};
    \node (v2) at (1, 0) {};
	\node (v3) at (2, 0) {};
	\node (v4) at (5, 0) {};
	\node (v5) at (1, 1) {};
	\node[draw = none] (dots) at (3.5, 0) {$\ldots$};
	\draw (v1) -- (v2) -- (v3) -- (3, 0);
	\draw (v5) -- (v2);
	\draw (4, 0) -- (v4);
\end{tikzpicture}\tabularnewline
$\E_{n}$, $6\le n\le8$ & \begin{tikzpicture}
[baseline=(v1).base, scale = 0.75, line width=0.5pt, every node/.style={circle, draw, inner sep=1pt}]
    \node (v1) at (0, 0) {};
    \node (v2) at (1, 0) {};
	\node (v3) at (2, 0) {};
	\node (v4) at (3, 0) {};
	\node (v5) at (6, 0) {};
    \node (v6) at (2, 1) {};
	\node[draw = none] (dots) at (4.5, 0) {$\ldots$};
	\draw (v1) -- (v2) -- (v3) -- (v4) -- (4, 0);
	\draw (v6) -- (v3);
	\draw (5, 0) -- (v5);
\end{tikzpicture}\tabularnewline
\end{tabular}
\par\end{centering}

\caption{\label{fig:DynkinDiagrams}Dynkin diagrams. The parameter $n$ denotes
the number of vertices in the graph.}
\end{figure}
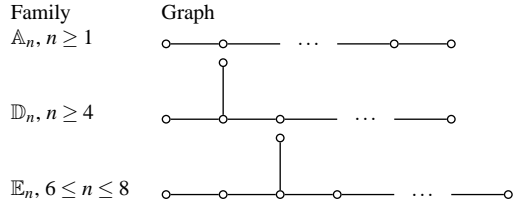

\section{The inflations method on simple bigraphs}

For any $\sigma\in\R$ and $\left(s,r\right)\in\left\{ 1,\ldots,n\right\} ^{2}$
let $M_{s\, r}^{\sigma}=\ident+\sigma\,\vec{e}_{s}\,\vec{e}_{r}^{\T}$,
where $\left(\vec{e}_{1},\ldots,\vec{e}_{n}\right)$ is the canonical
basis of $\R^{n}$ and $\ident$ is the identity matrix of size $n$.
 In \cite[sec. 6.2]{Gabriel1997} it is shown that if $A$ is any
positive definite symmetric quasi-Cartan matrix then:
\begin{enumerate}
\item $A_{i\, j}\in\left\{ -1,0,1\right\} $ for all $i\ne j$. Therefore
$\Gr_{A}$ must be a simple bigraph and any edge $e$ may be uniquely
identified with its endpoints (i.e. $e=\psi\left(e\right)$).
\item $A$ is $\Z$-equivalent to a Cartan matrix. This property is proven
using the \emph{inflations method}: start with $A^{\left(0\right)}=A$,
then set $A^{\left(k+1\right)}=\left(M_{s\, r}^{-1}\right)^{\T}\, A^{\left(k\right)}\, M_{s\, r}^{-1}$
whenever an entry $A_{s\, r}^{\left(k\right)}=1$ exists for each
$k=1,2,3,\ldots$ until no such entry exists. The last $A^{\left(\ell\right)}$
computed is guarantied to be a Cartan matrix.%
\footnote{This results are expressed in \cite{Gabriel1997} in terms of the
associated quadratic form $q\left(\vec{x}\right)=\frac{1}{2}\,\vec{x}^{\T}\, A\,\vec{x}$.%
} The matrix $\Dyn\left(A\right)=A^{\left(\ell\right)}$ is unique
up to isomorphism and may be called the \emph{Dynkin type} of $A$
(two matrices $A$ and $A^{\prime}$ being isomorphic if $A^{\prime}=P^{\T}\, A\, P$
for some permutation matrix $P$). 
\end{enumerate}
We transfer this terminology to bigraphs by saying that a bigraph
$G=\Gr_{A}$ has Dynkin type $\Delta=\Gr_{B}$ if $A$ is $\Z$-equivalent
to the Cartan matrix $B$. Moreover, let $T_{s\, r}\,\Gr_{A}=\Gr_{A^{\prime}}$
where $A^{\prime}=\left(M_{s\, r}^{-A_{s\, r}}\right)^{\T}\, A\, M_{s\, r}^{-A_{s\, r}}$.
In the following we give characterizations of graphs with Dynkin types
$\A_{n}$ and $\D_{n}$ which proof is graphical, constructive and
self-contained. In doing so, we will make use of the following basic
lemmas:
\begin{lemma}
\label{lem:flations_method}The bigraph $G$ has Dynkin type $\Delta$
if and only if $G=T_{s_{\ell}\, r_{\ell}}\cdots T_{s_{2}\, r_{2}}\, T_{s_{1}\, r_{1}}\,\Delta$
for some finite sequence $\left(s_{k},r_{k}\right)_{k=1}^{\ell}$.\end{lemma}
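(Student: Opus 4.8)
The plan is to prove the two implications separately; the whole argument rests on two properties of the operation $T_{s\,r}$. The first is that it preserves the Dynkin type: by definition $T_{s\,r}$ replaces the underlying matrix $A$ by $A'=\left(M_{s\,r}^{-A_{s\,r}}\right)^{\T}A\,M_{s\,r}^{-A_{s\,r}}$, and since $M_{s\,r}^{-A_{s\,r}}\in\mathcal{M}_{n}\left(\Z\right)$ has integral inverse $M_{s\,r}^{A_{s\,r}}$, the matrices $A$ and $A'$ are $\Z$-equivalent; in particular $A'$ is again positive definite whenever $A$ is, so by the entry bound recalled above every bigraph occurring in a chain of such operations is simple and each successive $T_{s\,r}$ is well defined. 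The second property is that $T_{s\,r}$ is an \emph{involution} on this class of bigraphs: taking $M=M_{s\,r}^{\sigma}$ with $\sigma=-A_{s\,r}$, and using $s\neq r$ together with $A_{s\,s}=2$, the $\left(s,r\right)$-entry of $M^{\T}A\,M$ works out to $A_{s\,r}+\sigma\,A_{s\,s}=-A_{s\,r}$; hence a second application of $T_{s\,r}$ conjugates by $M_{s\,r}^{A_{s\,r}}=\left(M_{s\,r}^{-A_{s\,r}}\right)^{-1}$ and recovers $A$.

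($\Leftarrow$) Suppose $G=T_{s_{\ell}\,r_{\ell}}\cdots T_{s_{1}\,r_{1}}\,\Delta$ with $\Delta=\Gr_{B}$ and $B$ a Cartan matrix. Applying the first property inductively along the sequence shows that $\Ad_{G}$ is $\Z$-equivalent to $B$, which is exactly what it means for $G$ to have Dynkin type $\Delta$.

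($\Rightarrow$) Suppose $G$ has Dynkin type $\Delta$, so that $A:=\Ad_{G}$ is $\Z$-equivalent to the Cartan matrix $B:=\Ad_{\Delta}$; in particular $A$ is positive definite. The inflations method then produces a chain $A=A^{\left(0\right)},A^{\left(1\right)},\ldots,A^{\left(\ell\right)}=\Dyn\left(A\right)$ in which $A^{\left(k\right)}=\left(M_{s_{k}\,r_{k}}^{-1}\right)^{\T}A^{\left(k-1\right)}M_{s_{k}\,r_{k}}^{-1}$ with $A_{s_{k}\,r_{k}}^{\left(k-1\right)}=1$ for $k=1,\ldots,\ell$. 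Since $A_{s_{k}\,r_{k}}^{\left(k-1\right)}=1$ forces $M_{s_{k}\,r_{k}}^{-A_{s_{k}\,r_{k}}^{\left(k-1\right)}}=M_{s_{k}\,r_{k}}^{-1}$, each step of this chain is literally the bigraph operation $T_{s_{k}\,r_{k}}$, so $\Gr_{\Dyn\left(A\right)}=T_{s_{\ell}\,r_{\ell}}\cdots T_{s_{1}\,r_{1}}\,G$. Running the chain backwards by the involution property gives $G=T_{s_{1}\,r_{1}}\cdots T_{s_{\ell}\,r_{\ell}}\,\Gr_{\Dyn\left(A\right)}$. Finally $\Dyn\left(A\right)$ and $B$ are $\Z$-equivalent Cartan matrices — both being $\Z$-equivalent to $A$ — hence isomorphic, the Dynkin type being a $\Z$-equivalence invariant up to isomorphism; so $\Gr_{\Dyn\left(A\right)}$ is isomorphic to $\Delta$, and relabelling the indices $\left(s_{k},r_{k}\right)$ through this isomorphism puts the displayed identity into the form asserted by the lemma.

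The steps I expect to need the most care are of a purely bookkeeping nature: pinning down the sign conventions so that an inflation step, where $A_{s_{k}\,r_{k}}^{\left(k-1\right)}=1$ and one conjugates by $M^{-1}$, is recognised as the operation $T_{s_{k}\,r_{k}}$ (for which the exponent is $\sigma=-A_{s_{k}\,r_{k}}^{\left(k-1\right)}=-1$); and the entry-flip computation showing that $\Ad_{T_{s\,r}\Gr_{A}}$ has $\left(s,r\right)$-entry $-A_{s\,r}$, which is precisely what makes the inflation chain reversible and so lets us travel from $\Delta$ to $G$ rather than only from $G$ to $\Delta$. The concluding identification of $\Gr_{\Dyn\left(A\right)}$ with $\Delta$ is harmless once Dynkin types are, as the definition already suggests, understood up to isomorphism.
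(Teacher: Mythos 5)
Your proof is correct and takes essentially the same route as the paper, whose entire proof of this lemma is the single line ``Immediate by the inflations method''; your write-up is the natural fleshing-out of that appeal. The one substantive detail you supply that the paper leaves implicit is the involution property $T_{s\,r}\,T_{s\,r}=\mathrm{id}$ (via the entry-flip $A'_{s\,r}=-A_{s\,r}$), which is exactly what is needed to reverse the inflation chain and travel from $\Delta$ back to $G$, and your computation of it is correct.
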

\begin{svmultproof}
Immediate by the inflations method.\end{svmultproof}
\begin{lemma}
\label{lem:deflation_computation}Let $G$ be a simple bigraph. Then
$T_{s\, r}\, G$ can be computed as follows:
\begin{enumerate}
\item If $G$ does not contains the edge $\left\{ s,r\right\} $ do nothing
(ignore steps 2 through 4).
\item Change the line style of the edge $\left\{ s,r\right\} $ to the opposite
style ($\dotted$ and $\solid$ being opposites).
\item If edge $\left\{ s,r\right\} $ is $\solid$ (resp. $\dotted$) in
$G$, then for each neighbor $i$ of $s$ add an edge between vertices
$i$ and $r$ with the same (resp. opposite) line style as the edge
$\left\{ i,s\right\} $, and then cancel out pairs of parallel edges
between $i$ and $s$ whenever they have opposite line style.
\item All other edges of $G$ remain the same.
\end{enumerate}
\end{lemma}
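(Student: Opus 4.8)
The plan is to unwind the definition and compute the entries of $A'$ in $T_{s\,r}\,\Gr_A=\Gr_{A'}$, where $A'=\bigl(M_{s\,r}^{-A_{s\,r}}\bigr)^{\T}\,A\,M_{s\,r}^{-A_{s\,r}}$, directly by hand. Put $\sigma=-A_{s\,r}$ and $M=M_{s\,r}^{\sigma}=\ident+\sigma\,\vec e_{s}\,\vec e_{r}^{\T}$; since $G$ is simple we have $A_{s\,r}\in\{-1,0,1\}$ and hence $\sigma\in\{-1,0,1\}$. If $\{s,r\}$ is not an edge then $A_{s\,r}=0$, so $\sigma=0$, $M=\ident$ and $A'=A$: nothing changes, which is step 1. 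So assume from now on that $\{s,r\}\in E(G)$, i.e.\ $\sigma=\pm1$.

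First I would note that right multiplication $A\mapsto AM$ simply adds $\sigma$ times column $s$ to column $r$, and left multiplication $B\mapsto M^{\T}B$ adds $\sigma$ times row $s$ to row $r$. Composing these two elementary operations and using $A_{s\,s}=A_{r\,r}=2$, $A_{s\,r}=-\sigma$, $\sigma^{2}=1$ and the symmetry of $A$, a short calculation yields: $A'_{i\,j}=A_{i\,j}$ whenever $i\neq r$ and $j\neq r$; $A'_{i\,r}=A'_{r\,i}=A_{i\,r}+\sigma\,A_{i\,s}$ whenever $i\neq r$; and $A'_{r\,r}=A_{r\,r}+2\sigma\,A_{r\,s}+\sigma^{2}A_{s\,s}=2$. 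In particular $A'$ is again symmetric with diagonal $2$, so $\Gr_{A'}$ is well defined, and the first formula already gives step 4: every edge not incident with $r$ is left unchanged.

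It then remains to translate the other two formulas into the graph moves of steps 2 and 3. Taking $i=s$ gives $A'_{s\,r}=A_{s\,r}+2\sigma=-A_{s\,r}$, i.e.\ the line style of $\{s,r\}$ is reversed, which is step 2. For $i\notin\{s,r\}$ the formula $A'_{i\,r}=A_{i\,r}+\sigma A_{i\,s}$ says that the datum of edges between $i$ and $r$ changes from $A_{i\,r}$ to $A_{i\,r}+\sigma A_{i\,s}$; under the coding $\solid\leftrightarrow-1$, $\dotted\leftrightarrow+1$, no edge $\leftrightarrow0$, and since $\sigma=+1$ exactly when $\{s,r\}$ is $\solid$ and $\sigma=-1$ exactly when it is $\dotted$, the summand $\sigma A_{i\,s}$ is precisely ``for each neighbour $i$ of $s$, one extra edge $\{i,r\}$ of the same line style as $\{i,s\}$ when $\{s,r\}$ is $\solid$, and of the opposite style when $\{s,r\}$ is $\dotted$'', while adding it to the pre-existing contribution $A_{i\,r}$ and using $-1+1=0$ is the ``then cancel opposite-style parallel edges'' clause of step 3. (Vertices with $A_{i\,s}=0$ are not neighbours of $s$ and contribute nothing; the remaining case $i=r$ is the diagonal computed above.) The argument is a finite, elementary matrix computation, so I do not expect a genuine obstacle; the only delicate point is keeping the two sign conventions ($\solid$ versus $\dotted$, and the sign of $\sigma$) consistent so that ``same/opposite style'' is attached to the correct sub-case, together with the verification $A'_{r\,r}=2$ that guarantees the output is again the bigraph of a quasi-Cartan matrix.
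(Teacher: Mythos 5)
Your proposal is correct and follows essentially the same route as the paper: compute the entries of $A'=\bigl(M_{s\,r}^{\sigma}\bigr)^{\T}A\,M_{s\,r}^{\sigma}$ with $\sigma=-A_{s\,r}$ and specialize using symmetry, $A_{i\,i}=2$ and $A_{s\,r}=\pm1$. If anything, you are slightly more explicit than the paper in translating the resulting formulas back into the edge/line-style operations of the lemma.
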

\begin{svmultproof}
A direct computation of $A^{\prime}=\left(M_{s\, r}^{\sigma}\right)^{\T}\, A\, M_{s\, r}^{\sigma}$
where $A$ is any square matrix yields:

\[
A_{i\, j}^{\prime}=\begin{cases}
A_{i\, j} & \text{if }i\ne r\text{ and }j\ne r\\
A_{r\, j}+\sigma\, A_{s\, j} & \text{if }i=r\text{ and }j\ne r\\
A_{i\, r}+\sigma\, A_{i\, s} & \text{if }i\ne r\text{ and }j=r\\
A_{r\, r}+\sigma\left(A_{s\, r}+A_{r\, s}\right)+\sigma^{2}\, A_{s\, s} & \text{if }i=r\text{ and }j=r
\end{cases}
\]
If we let $\sigma=-A_{s\, r}$, $A_{s\, r}=\pm1$, $A_{i\, j}=A_{j\, i}$
and $A_{i\, i}=2$ for all $i,j$, then $A_{r\, r}^{\prime}=2$ and
$A_{i\, j}^{\prime}=A_{j\, i}^{\prime}$, so that $A^{\prime}$ is
a symmetric quasi-Cartan matrix with properties: (1) if $A_{s\, r}=0$
then $A^{\prime}=A$; (2) $A_{s\, r}^{\prime}=-A_{s\, r}$; (3) $A_{i\, r}^{\prime}=A_{i\, r}\mp A_{i\, s}$
for all $i<r$; and (4) $A_{i\, j}^{\prime}=A_{i\, j}$ for all $i\le j$
with $j\ne r$.
\end{svmultproof}

\section{The Dynkin type $\A_{n}$ bigraphs}

The main theorem of \cite{Barot1999} is a characterization of the
Dynkin type $\A_{n}$ bigraphs in terms of the construction of such
bigraphs. We present a similar characterization with a constructive
proof in terms of its \emph{block tree} (i.e. its graph decomposition
up to biconnected components) as described by \cite[sec. 5.2]{Bondy2008}:
A block tree of any graph $G$, denoted here as $\BT\left(G\right)$,
is the bipartite graph with vertices $\blocks\cup S$ where $\blocks$
and $S$ are the set of blocks and the set of separating vertices
of $G$ respectively, and tree edges given by $\left\{ \left\{ B,s\right\} \middle|B\in\blocks,s\in V\left(B\right)\right\} $.
Thus, if $G$ is a simple connected graph then the degree of any separation
vertex $s$ of $G$ in the block tree equals the number of connected
components obtained after removing $s$ (along with its incident edges).

Let $\F\left[X,Y\right]$ be the bigraph obtained by joining each
pair $\left\{ x,y\right\} $ of $x\in X$ and $y\in Y$ by a $\dotted$
edge, and all other pairs $\left\{ x,x^{\prime}\right\} \in\binom{X}{2}$
and $\left\{ y,y^{\prime}\right\} \in\binom{Y}{2}$ by a $\solid$
edge. 
\begin{lemma}
\label{lem:An_deflation}Let $G=\F\left[X,Y\right]\cup\F\left[X^{\prime},Y^{\prime}\right]$
where $X$, $Y$, $X^{\prime}$ and $Y^{\prime}$ are pairwise disjoint
except for $X\cap X^{\prime}=\left\{ s\right\} $, then:
\begin{enumerate}
\item $T_{s\, r}\, G=\F\left[X,Y\setminus\left\{ r\right\} \right]\cup\F\left[X^{\prime}\cup\left\{ r\right\} ,Y^{\prime}\right]$
for any $r\in Y$, and
\item $T_{s\, r}\, G=\F\left[X\setminus\left\{ r\right\} ,Y\right]\cup\F\left[X^{\prime},Y^{\prime}\cup\left\{ r\right\} \right]$
for any $r\in X\setminus\left\{ s\right\} $.
\end{enumerate}
\end{lemma}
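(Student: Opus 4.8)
The plan is to obtain $T_{s\,r}\,G$ by a single application of Lemma~\ref{lem:deflation_computation}; intuitively the lemma says that deflating along $\left\{s,r\right\}$ detaches $r$ from its block of $G$ and reattaches it, through $s$, to the other block. The feature that makes the computation tractable is that $s$, being the vertex common to $\F\left[X,Y\right]$ and $\F\left[X^{\prime},Y^{\prime}\right]$ — each of which has an edge between every two of its vertices — is adjacent in $G$ to \emph{every} other vertex; in particular $\left\{s,r\right\}$ is an edge of $G$ in both parts, so step~1 of Lemma~\ref{lem:deflation_computation} does not apply (and steps~2--4 are carried out). By that lemma the deflation $T_{s\,r}$ leaves every edge of $G$ not incident to $r$ untouched, flips the line style of $\left\{s,r\right\}$, and for each vertex $i\notin\left\{s,r\right\}$ updates $\left\{i,r\right\}$ by superimposing on it a copy of $\left\{i,s\right\}$ — of the same line style when $\left\{s,r\right\}$ is \solid\ and of the opposite line style when $\left\{s,r\right\}$ is \dotted\ — and deleting the resulting parallel pair whenever the two edges have opposite styles. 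As each candidate bigraph in the statement differs from $G$ only in the edges at $r$, it suffices to verify, for $\left\{s,r\right\}$ and then for every such $i$, that this update yields the prescribed edge.

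First I would settle $\left\{s,r\right\}$: its style is flipped, and comparing with the style it carries in the target bigraph — where $r$ has been moved into $X^{\prime}$ if $r\in Y$ and into $Y^{\prime}$ if $r\in X\setminus\left\{s\right\}$ — disposes of this case. For the remaining vertices I would split according to which block of $G$ contains $i$: the block containing $r$ (which also contains $s$) or the other block (which still contains $s$). If $i$ lies in the block not containing $r$, then $\left\{i,r\right\}$ is absent in $G$, the update simply creates $\left\{i,r\right\}$ as the appropriate twist of $\left\{i,s\right\}$, and a short check over the two possible classes of $i$ in its block confirms this is exactly the edge $\left\{i,r\right\}$ of the target. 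If $i$ lies in the block containing $r$, then $\left\{i,r\right\}$ is already an edge of $G$, the superimposed copy of $\left\{i,s\right\}$ turns out to carry the \emph{opposite} style, so the parallel pair cancels and $i$ becomes non-adjacent to $r$ — exactly as in the target, from which $r$ has left the block of $i$.

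The only substantive step, and where I expect the actual work to be, is the cancellation just described. It rests on the defining property of $\F\left[X,Y\right]$, that the line style of an edge depends only on whether its two endpoints lie in the same class or in different classes: hence, for $i$, $s$, $r$ all in one block the three edges $\left\{i,s\right\}$, $\left\{i,r\right\}$, $\left\{s,r\right\}$ can realize only two style patterns, and both of them force $A^{\prime}_{i\,r}=A_{i\,r}\mp A_{i\,s}$ — the update rule from the proof of Lemma~\ref{lem:deflation_computation} — to vanish; this is a short case check, according to whether $i$ and $r$ share a class, that $\F\left[X,Y\right]$ is built to pass. Once this, the case of $\left\{s,r\right\}$, and the first case are in hand, every vertex other than $r$ has been accounted for — it is $s$, or it lies in one of the two blocks — so $T_{s\,r}\,G$ agrees edge by edge with the asserted bigraph, which proves both parts; the case $r\in X\setminus\left\{s\right\}$ is verbatim the case $r\in Y$, with the roles of \solid\ and \dotted\ for $\left\{s,r\right\}$ interchanged.
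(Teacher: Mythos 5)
Your proposal is correct and follows essentially the same route as the paper's own proof: a single application of Lemma~\ref{lem:deflation_computation}, checking that the copy of $\left\{ i,s\right\} $ superimposed on $\left\{ i,r\right\} $ cancels the existing edge when $i$ lies in the block containing $r$, creates the required new edge when $i$ lies in the other block, and that the style of $\left\{ s,r\right\} $ is flipped. The only caution for your deferred ``short case check'' is the $\solid$/$\dotted$ assignment inside $\F\left[X,Y\right]$: the cancellation goes through with the convention actually used in Figure~\ref{fig:An_Tsr_example} and in the paper's own argument (cross-class edges $\solid$, same-class edges $\dotted$), which is the opposite of the literal wording of the definition of $\F\left[X,Y\right]$ given in the text.
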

\begin{svmultproof}
Use Lemma \ref{lem:deflation_computation} to compute $G^{\prime}=T_{s\, r}\, G$.
First assume $r\in Y$. Notice that for each $i\in X\setminus\left\{ s\right\} $
there is a $\dotted$ edge $\left\{ s,i\right\} $ that cancels out
the $\solid$ edge $\left\{ i,r\right\} $ and for each $i\in Y$
there is a $\solid$ edge $\left\{ s,i\right\} $ that cancels out
the $\dotted$ edge $\left\{ i,r\right\} $, so that the vertex $r$
is not adjacent in $G^{\prime}$ to any other vertex of $X$ nor $Y$,
except for $s$, which is connected to $r$ by a $\dotted$ edge.
Also, since $r$ is not adjacent to any vertex of $X^{\prime}$ nor
$Y^{\prime}$ in $G$, then for each $i\in X^{\prime}$ (resp. $i\in Y^{\prime}$)
we add a $\dotted$ (resp. $\solid$) edge $\left\{ i,r\right\} $.
By definition we get $G^{\prime}=\F\left[X,Y\setminus\left\{ r\right\} \right]\cup\F\left[X^{\prime}\cup\left\{ r\right\} ,Y^{\prime}\right]$.
The case where $r\in X\setminus\left\{ s\right\} $ is analog.
\end{svmultproof}
\begin{figure}
\begin{centering}
\begin{tikzpicture}
[baseline=(s).base, scale = 1, line width=0.5pt, every node/.style={circle, inner sep=1pt}]
    \node (s) at (1, 1) {$s$};
    \node (r) at (3, 1) {$r$};
	\node (x) at (2, 2) {$x$};
	\node (y) at (2, 0) {$y$};
	\node (xp) at (0, 2) {$x^\prime$};
    \node (yp) at (0, 0) {$y^\prime$};
	\draw (xp) -- (yp) -- (s) -- (r) -- (x) -- (y) -- (s);
	\draw[dotted] (xp) -- (s) -- (x);
	\draw[dotted] (y) -- (r);
\end{tikzpicture}$\qquad\overset{T_{s\, r}}{\longmapsto}\qquad$\begin{tikzpicture}
[baseline=(s).base, scale = 1, line width=0.5pt, every node/.style={circle, inner sep=1pt}]
    \node (s) at (2, 1) {$s$};
    \node (r) at (0, 1) {$r$};
	\node (x) at (3, 2) {$x$};
	\node (y) at (3, 0) {$y$};
	\node (xp) at (1, 2) {$x^\prime$};
    \node (yp) at (1, 0) {$y^\prime$};
	\draw (r) -- (yp) -- (s) -- (y) -- (x);
	\draw (yp) -- (xp);
	\draw[dotted] (s) -- (r) -- (xp) -- (s) -- (x);
\end{tikzpicture}
\par\end{centering}

\caption{\label{fig:An_Tsr_example}A minimal nontrivial example of Lemma \ref{lem:An_deflation},
where $X=\left\{ x,s\right\} $, $Y=\left\{ y,r\right\} $, $X^{\prime}=\left\{ x^{\prime},s\right\} $
and $Y^{\prime}=\left\{ y^{\prime}\right\} $.}
\end{figure}

For simplicity, we say that two bigraphs $G$ and $G^{\prime}$ are
joined by a vertex $s$ if $V\left(G\right)\cap V\left(G^{\prime}\right)=\left\{ s\right\} $.
If we write $G\cong\F_{n}$ to mean that $G=\F\left[X,Y\right]$ for
some $n=\left|X\right|+\left|Y\right|$ then the previous lemma can
be succinctly rewritten as follows: 
\begin{corollary}
\label{cor:joined_F_deflation}Let $B_{1}\cong\F_{n}$ and $B_{2}\cong\F_{m}$
be joined by a vertex $s$, and let $r\in V\left(B_{1}\right)$. Then
$T_{s\, r}\,\left(B_{1}\cup B_{2}\right)=B_{1}^{\prime}\cup B_{2}^{\prime}$
where $B_{1}^{\prime}\cong\F_{n-1}$ and $B_{2}^{\prime}\cong\F_{m+1}$
are joined by $s$, and $r\in V\left(B_{2}^{\prime}\right)\setminus V\left(B_{1}^{\prime}\right)$.
\end{corollary}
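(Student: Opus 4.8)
The plan is to observe that this corollary is just Lemma~\ref{lem:An_deflation} rephrased in the compact notation $G\cong\F_n$, so the proof reduces to matching up hypotheses and carrying out a vertex count.

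First I would normalize the data. Write $B_1=\F[X,Y]$ with $|X|+|Y|=n$ and $B_2=\F[X',Y']$ with $|X'|+|Y'|=m$; here $X\cap Y=\emptyset$ and $X'\cap Y'=\emptyset$, since otherwise $\F[X,Y]$ or $\F[X',Y']$ would carry parallel edges of opposite line style and fail to be simple. As $s\in V(B_1)=X\cup Y$ and $s\in V(B_2)=X'\cup Y'$, and as $\F[X,Y]=\F[Y,X]$, I may swap the two parts within each pair if needed so that $s\in X$ and $s\in X'$. The hypothesis that $B_1$ and $B_2$ are joined by $s$ reads $(X\cup Y)\cap(X'\cup Y')=\{s\}$, and together with the disjointness just noted this shows that $X$, $Y$, $X'$, $Y'$ are pairwise disjoint apart from $X\cap X'=\{s\}$ --- precisely the hypothesis of Lemma~\ref{lem:An_deflation}.

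Next I would invoke Lemma~\ref{lem:An_deflation}. Since $r\in V(B_1)=X\cup Y$ and $X\cap Y=\emptyset$, either $r\in Y$ or $r\in X\setminus\{s\}$ --- the remaining possibility $r=s$ being degenerate, for then $\{s,r\}\notin E(G)$ and $T_{s\, r}\, G=G$ by Lemma~\ref{lem:deflation_computation}. If $r\in Y$, part~(1) gives $T_{s\, r}\, G=\F[X,Y\setminus\{r\}]\cup\F[X'\cup\{r\},Y']$, and I set $B_1'=\F[X,Y\setminus\{r\}]$, $B_2'=\F[X'\cup\{r\},Y']$. If $r\in X\setminus\{s\}$, part~(2) gives $T_{s\, r}\, G=\F[X\setminus\{r\},Y]\cup\F[X',Y'\cup\{r\}]$, and I set $B_1'=\F[X\setminus\{r\},Y]$, $B_2'=\F[X',Y'\cup\{r\}]$. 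In both cases $|V(B_1')|=(|X|+|Y|)-1=n-1$ and $|V(B_2')|=(|X'|+|Y'|)+1=m+1$, so $B_1'\cong\F_{n-1}$ and $B_2'\cong\F_{m+1}$. Since $r\ne s$, the vertex $s$ is retained in the first part of each of $B_1'$ and $B_2'$, whence $V(B_1')\cap V(B_2')=\{s\}$, i.e. $B_1'$ and $B_2'$ are joined by $s$; and in both cases $r$ is deleted from $B_1'$ and inserted into $B_2'$, so $r\in V(B_2')\setminus V(B_1')$, as claimed.

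I do not expect a genuine obstacle here: the one step that repays attention is the normalization, where one must confirm that the sole hypothesis that $B_1$ and $B_2$ are joined by $s$ delivers all of the pairwise-disjointness-except-$X\cap X'=\{s\}$ requirements of Lemma~\ref{lem:An_deflation}; after that the argument is pure notation-chasing together with the counts $n\mapsto n-1$ and $m\mapsto m+1$.
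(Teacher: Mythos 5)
Your proof is correct and matches the paper's treatment: the paper offers no separate argument for this corollary, presenting it as an immediate restatement of Lemma~\ref{lem:An_deflation}, and your write-up simply makes explicit the normalization ($s\in X\cap X'$, pairwise disjointness) and the vertex counts $n\mapsto n-1$, $m\mapsto m+1$ that this restatement requires. Your side remark that $r=s$ must be excluded (since then $T_{s\,r}$ acts as the identity and the conclusion would fail) is a reasonable reading of an implicit hypothesis the paper also leaves unstated.
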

We say that the block tree $\BT\left(G\right)$ is an \emph{$\A$-block
tree} if each block $B_{i}\cong\F_{n_{i}}$ for some $n_{i}$ and
each separation vertex $s$ of $G$ has degree $2$ in $\BT\left(G\right)$
(so that $s$ joins exactly two blocks of $G$).
\begin{lemma}
\label{lem:TpreservesABT}If $\BT\left(G\right)$ is an $\A$-block
tree then $\BT\left(T_{s\, r}\, G\right)$ is an $\A$-block tree.\end{lemma}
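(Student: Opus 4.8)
The plan is to localize the effect of $T_{s,r}$ to the (at most two) blocks of $G$ meeting at $s$ and then invoke Corollary~\ref{cor:joined_F_deflation}. First I would dispose of the trivial case: if $\{s,r\}\notin E(G)$ then $T_{s,r}G=G$ by the first case of Lemma~\ref{lem:deflation_computation} and there is nothing to prove, so assume $\{s,r\}\in E(G)$. Every edge lies in a unique block, so there is a unique block $B$ of $G$ with $s,r\in V(B)$; by hypothesis $B\cong\F_{n}$, and since each $\F$-block is a complete graph and $r\neq s$ we have $n\geq2$. If $s$ is a separation vertex of $G$ it has degree $2$ in $\BT(G)$, hence lies in exactly one further block $B'\cong\F_{m}$ with $V(B)\cap V(B')=\{s\}$; if $s$ is not a separation vertex, set $B'=\{s\}$, regarded as a degenerate factor $\F[\{s\},\emptyset]$. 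In either case $r\notin V(B')$.

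Next I would argue that $T_{s,r}$ alters only edges inside $V(B)\cup V(B')$. By Lemma~\ref{lem:deflation_computation} the edges that change are $\{s,r\}$ and the edges $\{i,r\}$ with $i\in N_{G}(s)$; completeness of the $\F$-blocks gives $N_{G}(s)=(V(B)\cup V(B'))\setminus\{s\}$, so all changed edges lie inside $V(B)\cup V(B')$. Because $\BT(G)$ is a tree, no block distinct from $B$ and $B'$ can contain two vertices of $V(B)\cup V(B')$ — it would otherwise share two vertices with $B$ or $B'$, or close a cycle in $\BT(G)$ through $s$ — so the sub-bigraph induced on $V(B)\cup V(B')$ is exactly $B\cup B'$ and the restriction of $T_{s,r}$ to it is the standalone operation $T_{s,r}(B\cup B')$. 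Hence $T_{s,r}G$ is $G$ with $B\cup B'$ replaced by $T_{s,r}(B\cup B')$, all other edges untouched. Now Lemma~\ref{lem:An_deflation} (equivalently Corollary~\ref{cor:joined_F_deflation}), applied with $B$ the factor containing $r$, yields $T_{s,r}(B\cup B')=B_{1}'\cup B_{2}'$ with $B_{1}'\cong\F_{n-1}$ and $B_{2}'\cong\F_{m+1}$ joined at $s$, where $V(B_{1}')=V(B)\setminus\{r\}$ and $V(B_{2}')=V(B')\cup\{r\}$.

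Finally I would reassemble the block tree of $T_{s,r}G$: its blocks are those of $G$ other than $B,B'$, together with whichever of $B_{1}',B_{2}'$ is nontrivial (each $\cong\F_{n_{i}}$ for a suitable $n_{i}$), glued along the separation vertices of $G$ as before. Here the verification needs care. For a separation vertex $t\neq s$ lying in $B$ or $B'$ and in a further block $C$, one checks that $t$ still lies in exactly one of $B_{1}',B_{2}'$, that $C$ is unchanged, and that $V(B_{i}')\cap V(C)=\{t\}$ — the last point using $V(B)\cap V(C)=\{t\}$ and, when $t=r$, that $C$ and $B'$ are non-adjacent in $\BT(G)$ hence vertex-disjoint; one also checks $V(B_{1}')\cap V(B_{2}')=\{s\}$, so no spurious separation vertex appears. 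The delicate points are the two degenerate configurations: when $s$ was not separating, the block $B\cong\F_{n}$ splits into $B_{1}'\cong\F_{n-1}$ and the edge $B_{2}'\cong\F_{2}$ and $s$ becomes a degree-$2$ separation vertex; when $n=2$, $B_{1}'$ is a lone vertex, so $B$ is absorbed into $B_{2}'$ and $s$ stops separating. In every case the two defining conditions of an $\A$-block tree survive. I expect this bookkeeping around the degenerate blocks, together with the tree arguments excluding unwanted shared vertices, to be the main obstacle; the rest is routine.
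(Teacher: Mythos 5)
Your proposal is correct and follows essentially the same route as the paper: isolate the block $B$ containing $\{s,r\}$ together with the second block (or degenerate single vertex) at $s$, apply Corollary~\ref{cor:joined_F_deflation}, and then update the block tree, treating separately the cases where $B$ collapses ($n=2$) and where $s$ becomes a new separation vertex. Your explicit localization argument (that all changed edges lie in $V(B)\cup V(B')$ and that no other block meets this set in two vertices) merely spells out what the paper leaves implicit via property 4 of Lemma~\ref{lem:deflation_computation}.
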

\begin{proof}
Let $G^{\prime}=T_{s\, r}\, G$ and suppose that $\BT\left(G\right)$
is an $\A$-block tree. If $G$ does not contain the edge $\left\{ s,r\right\} $
then $G^{\prime}=G$ and the lemma is trivially true; otherwise let
$B$ be the block of $G$ containing edge $\left\{ s,r\right\} $
and consider the bigraph $\bar{B}$ induced by $s$ and all neighbors
of $s$ that do not belong to $B$. We have two cases: 
\begin{enumerate}
\item either $s$ is not a separation vertex, in which case $B\cong\F_{m}$
and $\bar{B}\cong\F_{m^{\prime}}$ for some $m\ge2$ and $m^{\prime}=1$,
or 
\item $s$ is a separation vertex, and $B\cong\F_{m}$, $\bar{B}\cong\F_{m^{\prime}}$
for some $m\ge2$ and $m^{\prime}>1$.
\end{enumerate}
Since $B$ and $\bar{B}$ are joined by $s$, then by Corollary \ref{cor:joined_F_deflation}
we have $T_{s\, r}\left(B\cup\bar{B}\right)=B^{\prime}\cup\bar{B}^{\prime}$
where $B^{\prime}\cong\F_{m-1}$, $\bar{B}^{\prime}\cong\F_{m^{\prime}+1}$,
and $r\in V\left(\bar{B}^{\prime}\right)$. Using property 4 of Lemma
\ref{lem:deflation_computation} we can compute $\BT\left(G^{\prime}\right)$
from $\BT\left(G\right)$ as follows:
\begin{enumerate}
\item Let $\mathcal{T}=\left(\mathcal{V}\cup\left\{ s,\bar{B}\right\} ,\mathcal{E}\cup\left\{ \left\{ B,s\right\} ,\left\{ s,\bar{B}\right\} \right\} \right)$
where $\BT\left(G\right)=\left(\mathcal{V},\mathcal{E}\right)$. Note
that $\deg_{\mathcal{T}}\left(s\right)=2$, and that $\mathcal{T}=\BT\left(G\right)$
if and only if $m^{\prime}>1$.
\item If $m=2$ then $B^{\prime}$ is not a block of $G^{\prime}$ and $s$
is not a separation vertex of $G^{\prime}$. In this case let $\mathcal{T}^{\prime}$
be the result of shrinking $B$, $s$ and $\bar{B}$ into a single
vertex $\bar{B}^{\prime}$ in $\mathcal{T}$.
\item If $m>2$ then both $B^{\prime}$ and $\bar{B}^{\prime}$ are blocks
of $G^{\prime}$ joined by the separation vertex $s$. In this case
let $\mathcal{T}^{\prime}$ be the result of $\mathcal{T}$ after
removing its edge $\left\{ r,B\right\} $, replacing $B$ and $\bar{B}$
with $B^{\prime}$ and $\bar{B}^{\prime}$ respectively, and then
adding the edge $\left\{ r,\bar{B}^{\prime}\right\} $.\qed
\end{enumerate}
\end{proof}
\begin{theorem}
\label{teo:An_iff_Ablocktrees}$\Dyn\left(G\right)=\A_{\left|V\left(G\right)\right|}$
if and only if $\BT\left(G\right)$ is an $\A$-block tree.\end{theorem}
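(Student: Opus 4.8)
The plan is to avoid any explicit reduction of $G$ to a path and instead combine the $T$-invariance of the $\A$-block-tree property (Lemma \ref{lem:TpreservesABT}) with the list of connected Dynkin diagrams. Two elementary observations carry the argument. First, the bigraph of the Cartan matrix of type $\A_{m}$ is the path on $m$ vertices: each of its $m-1$ edges is a block isomorphic to $\F_{2}$, and each of its $m-2$ interior vertices is a separation vertex lying in exactly two blocks, so $\BT$ of this bigraph is an $\A$-block tree (the cases $m\le 2$ being immediate). Second, among the connected bigraphs of symmetric Cartan matrices, that is, the diagrams of Figure \ref{fig:DynkinDiagrams}, only those of type $\A_{m}$ admit an $\A$-block tree: the diagrams $\D_{m}$ (with $m\ge 4$) and $\E_{m}$ (with $6\le m\le 8$) each contain a vertex of degree $3$ whose removal leaves three connected components, hence a separation vertex of degree $3$ in the block tree, which the $\A$-block-tree condition forbids.

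For the implication from $\Dyn\left(G\right)=\A_{n}$, with $n=\left|V\left(G\right)\right|$, to ``$\BT\left(G\right)$ is an $\A$-block tree'': by Lemma \ref{lem:flations_method} we may write $G=T_{s_{\ell}\,r_{\ell}}\cdots T_{s_{1}\,r_{1}}\,\A_{n}$ for some finite sequence $\left(s_{k},r_{k}\right)_{k=1}^{\ell}$. The bigraph $\A_{n}$ has an $\A$-block tree by the first observation, and by Lemma \ref{lem:TpreservesABT} every application of an operator $T_{s\,r}$ carries a bigraph with an $\A$-block tree to a bigraph with an $\A$-block tree; applying this successively along the sequence shows that $\BT\left(G\right)$ is an $\A$-block tree.

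For the converse, assume $\BT\left(G\right)$ is an $\A$-block tree; since a block tree is a tree, $G$ is connected. Running the inflations method on $\Ad_{G}$ realizes the bigraph $\Dyn\left(G\right)$ of the Cartan matrix $\Dyn\left(\Ad_{G}\right)$ as $T_{s_{\ell}\,r_{\ell}}\cdots T_{s_{1}\,r_{1}}\,G$ for a finite sequence of deflation steps. By Lemma \ref{lem:TpreservesABT} this bigraph again has an $\A$-block tree; in particular it is connected, and being the bigraph of a Cartan matrix it is therefore a single Dynkin diagram of Figure \ref{fig:DynkinDiagrams}. By the second observation it must be of type $\A_{m}$ for some $m$, and since $\Z$-equivalent matrices have the same size, $m=n$; hence $\Dyn\left(G\right)=\A_{n}$.

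The genuine difficulty, I expect, is not in these few lines but in recognizing that one should not try to prove the converse constructively by peeling vertices off $G$ one at a time: a pendant peeled off using a separation vertex as pivot simply re-attaches to the rest of the graph, and the ``excess'' $\sum_{i}\left(n_{i}-2\right)$ over the blocks $B_{i}\cong\F_{n_{i}}$ cannot in general be lowered without first performing $T$-moves that leave it unchanged, so a direct induction would need a somewhat delicate potential argument. The argument above sidesteps this, at the price of a single technical point: one must check that the Cartan bigraph output by the inflations method is \emph{connected} before invoking the classification of Dynkin diagrams, which is precisely what Lemma \ref{lem:TpreservesABT}, applied to the entire deflation sequence, guarantees, since an $\A$-block tree is in particular a tree.
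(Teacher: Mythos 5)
Your forward direction coincides with the paper's: the base case is that $\BT\left(\A_{n}\right)$ is an $\A$-block tree, and Lemma \ref{lem:TpreservesABT} propagates the property along the deflation sequence supplied by Lemma \ref{lem:flations_method}. Your observation that the diagrams $\D_{m}$ and $\E_{m}$ cannot carry an $\A$-block tree (the branch vertex is a separation vertex of degree $3$ in the block tree) is also correct. The converse, however, has a genuine gap. You assert that ``running the inflations method on $\Ad_{G}$ realizes the bigraph $\Dyn\left(G\right)$ of the Cartan matrix $\Dyn\left(\Ad_{G}\right)$ as $T_{s_{\ell}\,r_{\ell}}\cdots T_{s_{1}\,r_{1}}\,G$ for a finite sequence of deflation steps.'' But the inflations method is only guaranteed to terminate, and to terminate in a Cartan matrix, when $\Ad_{G}$ is \emph{positive definite}: that is the standing hypothesis of the result quoted from \cite{Gabriel1997} in Section 2, and $\Dyn\left(\Ad_{G}\right)$ is only defined under it. Your hypothesis is merely that $\BT\left(G\right)$ is an $\A$-block tree, and nothing in your argument shows this forces positive definiteness --- indeed, that $\Ad_{G}$ is $\Z$-equivalent to a positive definite Cartan matrix is precisely the content of the implication being proved. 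As written, the converse assumes $G$ has \emph{some} Dynkin type and then argues it must be $\A_{n}$; the first half is unjustified. Lemma \ref{lem:TpreservesABT} does keep every intermediate bigraph simple and with an $\A$-block tree, but it says nothing about termination of the deflation process.

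The paper avoids this by using the ``if'' direction of Lemma \ref{lem:flations_method}: it exhibits an explicit finite sequence of $T$-operations carrying $G$ to $\A_{n}$ (repeatedly merging a leaf block into its neighbour via Corollary \ref{cor:joined_F_deflation} until a single $\F_{n}$ remains, then linearizing $\F_{n}$ into the path), which directly certifies the $\Z$-equivalence and yields positive definiteness as a byproduct. To rescue your shorter route you would need to add one of the following: (a) a direct proof that a bigraph whose blocks are all of the form $\F_{m}$, glued along a tree with separation vertices of degree $2$, is positive definite (each $\F_{m}$ is $\ident+\vec{v}\,\vec{v}^{\T}$ for a sign vector $\vec{v}$, so its form is bounded below by $\frac{1}{2}\left|\vec{x}\right|^{2}$, and one can sum these bounds over the block decomposition); or (b) a direct termination argument for the inflation process restricted to $\A$-block-tree bigraphs. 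Either is a real missing step, not a formality. Your closing worry about the constructive route is, incidentally, misplaced: the paper does not peel off vertices one at a time but whole leaf blocks, and each such step strictly decreases the number of blocks, so no delicate potential function is needed.
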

\begin{svmultproof}
$\Rightarrow$) By induction on $k$ of Lemma \ref{lem:flations_method}.
For the base case ($G^{\left(0\right)}=\A_{n}$): \newcommand{\AnV}{\begin{tikzpicture}
[baseline=-0.5ex, scale = 1, line width=0.5pt, every node/.style={circle, inner sep=0pt}]
    \node (v1) at (0, 0) {$v_1$};
    \node (v2) at (1, 0) {$v_2$};
	\node (v3) at (4, 0) {$v_n$};
	\node[draw = none] (v5) at (2.5, 0) {$\ldots$};
	\draw (v1) -- (v2) -- (2, 0);
	\draw (3, 0) -- (v3) ;
\end{tikzpicture}} \newcommand{\AnBT}{\begin{tikzpicture}
[baseline=-0.5ex, scale = 1, line width=0.5pt, every node/.style={circle, inner sep=0pt}]
    \node (B1) at (0, 0) {$B_1$};
    \node (v2) at (1, 0) {$v_2$};
	\node (B2) at (2, 0) {$B_2$};
	\node[draw = none] (dots) at (3.5, 0) {$\ldots$};
	\node (Bn2) at (5, 0) {$B_{n-2}$};
	\node (vn1) at (6, 0) {$v_{n-1}$};
	\node (Bn1) at (7, 0) {$B_{n-1}$};
	\draw (B1) -- (v2) -- (B2) -- (3,0);
	\draw (4,0) -- (Bn2) -- (vn1) -- (Bn1);
\end{tikzpicture}}
\begin{eqnarray*}
\A_{n} & = & \AnV\\
\BT\left(\A_{n}\right) & = & \AnBT
\end{eqnarray*}
where each block $B_{i}=\F\left[\left\{ v_{i}\right\} ,\left\{ v_{i+1}\right\} \right]\cong\F_{2}$
for $i\in\left\{ 1,\ldots,n-1\right\} $, and each separation vertex
$v_{2},\ldots,v_{n-1}$ clearly has degree $2$ in $\BT\left(\A_{n}\right)$.
Therefore $\BT\left(\A_{n}\right)$ is an $\A$-block tree. For the
induction step, suppose that $\BT\left(G^{\left(k-1\right)}\right)$
is an $\A$-block tree and compute $G^{\prime}=T_{s\, r}\, G^{\left(k-1\right)}$.
Then by Lemma \ref{lem:TpreservesABT} $\BT\left(G^{\left(k\right)}\right)$
is also an $\A$-block tree.

$\Leftarrow$) Let $\mathcal{T}=\BT\left(G\right)$ and $n=\left|V\left(G\right)\right|$.
We show how to transform $G$ into $H\cong\F_{n}$, and then $H$
into $\A_{n}$ using $T$ transformations only. Suppose $G\ncong\F_{n}$
then $\mathcal{T}$ contains a path $B_{1}\, s\, B_{2}$ where $B_{1}\cong\F_{n_{1}}$,
$B_{2}\cong\F_{n_{2}}$, and $B_{1}$ is a leaf block of $G$ (i.e.
$\deg_{\mathcal{T}}\left(B_{1}\right)=1$). For simplicity, let $V\left(B_{1}\right)=\left\{ v_{1},\ldots,v_{n_{1}}\right\} $
and $B_{3}=T_{s\, v_{1}}\, T_{s\, v_{2}}\,\ldots\, T_{s\, v_{n}}\left(B_{1}\cup B_{2}\right)$.
Using Corollary \ref{cor:joined_F_deflation} repeatedly we see that
$B_{3}\cong\F_{n_{1}+n_{2}}$. Thus, if $G^{\prime}=T_{s\, v_{1}}\,\ldots\, T_{s\, v_{n}}\, G$,
then $\BT\left(G^{\prime}\right)$ can be obtained from $\BT\left(G\right)$
by shrinking $B_{1}$, $s$ and $B_{2}$ into a new vertex $B_{3}$.
Repeating this shrinking operation we arrive to a bigraph $H\cong\F_{n}$.
Finally, let $H=\F\left[\left\{ x_{1},\ldots,x_{m}\right\} ,\left\{ y_{1},\ldots,y_{m^{\prime}}\right\} \right]$
and 
\[
H^{\prime}=T_{x_{m}\, x_{m-1}}\,\ldots\, T_{x_{3}\, x_{2}}\, T_{x_{2}\, x_{1}}\, T_{y_{1}\, y_{2}}\, T_{y_{2}\, y_{3}}\,\ldots\, T_{y_{m^{\prime}-1}\, y_{m^{\prime}}}\, H
\]
Then $H^{\prime}=\A_{n}$ with $v_{i}=x_{i}$ for $i\in\left\{ 1,\ldots,m\right\} $
and $v_{j+m}=y_{j}$ for $j\in\left\{ 1,\ldots,m^{\prime}\right\} $.
\end{svmultproof}

\section{The Dynkin type $\D_{n}$ bigraphs}

Next, we give a characterization of the bigraphs of Dynkin type $\D_{n}$.
In \cite{Barot2001} two types of constructions are suggested. We
present yet another construction which proof follows the same pattern
as the $\A_{n}$ case, and then use it to prove another construction
that resembles the one founded in \cite{Barot2001}. For our construction
we need:
\begin{itemize}
\item A cycle bigraph $H=\left(\left\{ x_{1},x_{2},\ldots,x_{h}\right\} ,\left\{ e_{1},e_{2},\ldots,e_{h}\right\} \right)$
with $h\ge2$, defined by $\psi\left(e_{i}\right)=\left\{ x_{i},x_{i+1}\right\} $
for $i\in\left\{ 1,\ldots,h\right\} $ ($x_{h+1}=x_{1}$) and such
that $H$ contains an odd number of $\dotted$ edges.
\item Bigraphs $F_{1},F_{2},\ldots,F_{h}$ where each $F_{i}=\left(V_{i},E_{i}\right)$
has Dynkin type $\A_{n_{i}}$, contains the edge $e_{i}$ with the
same line style as in $H$ (i.e. $\ls_{F_{i}}\left(e_{i}\right)=\ls_{H}\left(e_{i}\right)$),
and neither $x_{i}$ nor $x_{i+1}$ are separation vertices of $F_{i}$.
We also require that all $F_{i}$'s are vertex pairwise disjoint except
for the endpoints of $e_{i}$; more precisely $V_{i}\cap V_{i+1}=\left\{ x_{i+1}\right\} $
for $i\in\left\{ 1,\ldots,h-1\right\} $, $V_{h}\cap V_{1}=\left\{ x_{1}\right\} $,
and $V_{i}\cap V_{j}=\emptyset$ for all other pairs of $i,j$.
\end{itemize}
Taking the union of all $F_{i}$ we get a new bigraph $\hat{H}$ which
by construction contains a copy of $H$. Call the simplified version
of $\hat{H}$ (where each pair of parallel edges with opposite line
styles are erased) the \emph{$\D$-cycle gluing} of $H$ and $F_{1},F_{2},\ldots,F_{h}$.
\begin{lemma}
\label{lem:T_preserves_Dcycle}If $G$ is a $\D$-cycle gluing, then
$T_{s\, r}\, G$ is a $\D$-cycle gluing.\end{lemma}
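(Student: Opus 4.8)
The goal is to show that applying a deflation $T_{s\,r}$ to a $\D$-cycle gluing produces another $\D$-cycle gluing. The strategy is to mirror the $\A_n$ argument: analyze how $T_{s\,r}$ acts on the ``skeleton'' cycle $H$ and on the $\A$-type pieces $F_i$ hanging off it, using Lemma~\ref{lem:deflation_computation} for the local computation and Theorem~\ref{teo:An_iff_Ablocktrees} (equivalently Lemma~\ref{lem:TpreservesABT}) to control the $F_i$'s. First I would dispose of the trivial case: if $G$ contains no edge $\{s,r\}$, then $T_{s\,r}\,G=G$ and there is nothing to prove. Otherwise the edge $\{s,r\}$ lies in exactly one of the $F_i$ (since the $F_i$ overlap only in single vertices $x_j$, an edge belongs to a unique piece), say $\{s,r\}\in E(F_k)$.

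**Main case split.** The key dichotomy is whether the vertex $s$ is one of the cycle vertices $x_i$ or not. If $s\notin\{x_1,\dots,x_h\}$, then $s$ has all its neighbors inside $F_k$ (the cycle vertices are the only ones shared between pieces, and $s$ is not one of them), so the neighbor-copying in step~3 of Lemma~\ref{lem:deflation_computation} stays entirely within $F_k$; hence $T_{s\,r}$ modifies only $F_k$, leaving $H$ and the other $F_j$ untouched, and by Lemma~\ref{lem:TpreservesABT} (applied via block trees — note $F_k$ has Dynkin type $\A_{n_k}$ so its block tree is an $\A$-block tree) the result $F_k'=T_{s\,r}\,F_k$ still has Dynkin type $\A_{n_k}$. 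One must also check the side conditions survive: that $F_k'$ still contains $e_k$ with the correct line style and that $x_k,x_{k+1}$ remain non-separating in $F_k'$. The first holds because $e_k$ has both endpoints among the cycle vertices, so if $s$ is not a cycle vertex then $s\notin\{x_k,x_{k+1}\}$ and the edge $e_k=\{x_k,x_{k+1}\}$ is not the edge $\{s,r\}$ being flipped, nor is it created or destroyed in step~3 unless $x_k$ or $x_{k+1}$ is a neighbor of $s$ — here I need to track carefully how step~3 can add an edge $\{x_k,r\}$ or alter parallel edges at $x_k$, but since $r\in V(F_k)$ as well (it's a neighbor of $s$, which lies only in $F_k$ — wait, $r$ need only satisfy $A_{s\,r}=1$, so $r$ is a neighbor of $s$, hence $r\in V(F_k)$), the whole operation is internal to $F_k$ and we may simply invoke that $F_k'$ has Dynkin type $\A_{n_k}$ and separately argue the structural side-conditions from the block-tree description.

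**The genuinely new case.** The interesting situation is when $s$ is a cycle vertex, say $s=x_i$. Then $s=x_i$ is shared between $F_{i-1}$ and $F_i$ and its neighbors spill across both pieces, so step~3 of Lemma~\ref{lem:deflation_computation} copies edges from $F_{i-1}$-neighbors of $x_i$ to $r$ and from $F_i$-neighbors of $x_i$ to $r$. Now $r$ is a neighbor of $s=x_i$, so $r$ lies in $F_{i-1}$ or $F_i$ (or $r=x_{i\pm1}$, a cycle vertex shared further along). I expect the cleanest bookkeeping here is: the operation $T_{x_i\,r}$ acts on the ``merged'' bigraph $F_{i-1}\cup F_i$, which is itself a bigraph of Dynkin type $\A_{n_{i-1}+n_i-1}$ glued along $x_i$; wait — that union is not itself $\A$-type in general since $x_i$ may be a cut vertex joining them, but its block tree \emph{is} an $\A$-block tree, so $T_{x_i\,r}$ preserves that by Lemma~\ref{lem:TpreservesABT}. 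The subtlety — and this is the hard part — is that the two cycle edges $e_{i-1}=\{x_{i-1},x_i\}$ and $e_i=\{x_i,x_{i+1}\}$ incident to $s=x_i$ get dragged: step~3 may move the edge $e_{i-1}$ to become an edge $\{x_{i-1},r\}$ (with matching or opposite style depending on whether $\{x_i,r\}$ is solid or dotted) and similarly $e_i$. So after deflation the cycle $H$ is replaced by a new closed walk in which $x_i$ is ``rerouted through $r$'': the new skeleton is $H' = x_1 x_2 \cdots x_{i-1}\, r\, x_{i+1}\cdots x_h$ if $r\notin V(H)$, i.e.\ one of the $x_i$'s is swapped for $r$, with the new pieces being $F_{i-1}' , F_i'$ (the appropriately deflated/merged-and-resplit parts) and all other $F_j$ unchanged. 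I would verify that the parity condition — $H'$ still has an odd number of dotted edges — is preserved: flipping the style of $\{s,r\}$ changes the count inside whichever $F$ contains it, but the two \emph{cycle} edges relocated to $r$ change style together according to the style of $\{x_i,r\}$, and a short parity computation (two edges flip simultaneously, or zero do) shows the odd count of dotted edges on the cycle is invariant. The remaining subcases — $r=x_{i+1}$ (so $\{s,r\}=e_i$ itself is being flipped, which directly toggles one cycle edge's style and shortens/reshapes the cycle, possibly down to $h=2$), or $r$ already on the cycle elsewhere (creating a chord that reorganizes which edges count as ``cycle'' edges) — each need their own check that the output still decomposes as an odd-dotted cycle with $\A$-type pieces satisfying the non-separation conditions at the new contact vertices. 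The main obstacle I anticipate is precisely this last bookkeeping: tracking, across all the positions of $r$ relative to the cycle, exactly which edges form the new skeleton $H'$ and confirming both the odd-dotted-cycle property and the non-separating-endpoint conditions for the new $F_i'$, using property~4 of Lemma~\ref{lem:deflation_computation} to argue that everything outside the directly affected pieces is untouched.
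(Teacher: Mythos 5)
There is a genuine gap: your case analysis is keyed to the wrong vertex. By the computation in Lemma \ref{lem:deflation_computation}, $T_{s\,r}$ changes only the entries in row and column $r$ (property 4 there: $A_{i\,j}^{\prime}=A_{i\,j}$ whenever $j\ne r$); edges incident to $s$ but not to $r$ are never created or destroyed. Hence the dichotomy that matters is whether $r$ lies on the cycle $H$, not whether $s$ does, and this reversal breaks both branches of your argument. In your ``easy'' branch ($s\notin\{x_1,\ldots,x_h\}$) you conclude that $H$ and the other $F_j$ are untouched; but if $r=x_i$ is a cycle vertex, step 3 of Lemma \ref{lem:deflation_computation} copies the neighbourhood of $s$ onto $x_i$, and since $s$, $x_i$ and $x_{i+1}$ lie in a common block $B_i\cong\F_{m_i}$ of $F_i$, the cycle edge $\left\{ x_i,x_{i+1}\right\}$ is cancelled and the cycle is rerouted as $\ldots x_i\, s\, x_{i+1}\ldots$ --- the cycle \emph{grows} by absorbing $s$ (try $G=\D_n$ with $s=v_3$, $r=v_1$). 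You flag this danger parenthetically (``unless $x_k$ or $x_{k+1}$ is a neighbor of $s$'') but then dismiss it because ``the whole operation is internal to $F_k$''; that does not show $e_k$ survives or that $x_k,x_{k+1}$ remain non-separating, and in fact they need not. Conversely, in your ``genuinely new'' branch ($s=x_i$, $r\notin V(H)$) you predict that $x_i$ is swapped for $r$ on the cycle; in fact the two cycle edges at $x_i$ are not incident to $r$ and persist unchanged, and the correct statement (via Corollary \ref{cor:joined_F_deflation}) is that $r$ merely migrates from $F_{i-1}$ to $F_i$ or vice versa, leaving $H$ intact.

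The paper instead splits on membership of $s$ and of $r$ in $V(H)$ separately, giving four cases: neither on $H$ (one $F_i$ is deflated, nothing else moves); only $s$ on $H$ ($r$ migrates between adjacent pieces, $H$ fixed); only $r$ on $H$ (the cycle absorbs $s$ and grows, with the odd-dotted parity checked there); and both on $H$ (the cycle shrinks, deduced from the previous case via $T_{s\,r}\,T_{s\,r}=\mathrm{id}$). The parity bookkeeping and the reconstruction of the new pieces $F_i^{\prime}$, which you rightly identify as the delicate part, only come out correctly once the roles of $s$ and $r$ are sorted in this way.
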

\begin{proof}
Let $G$ be the $\D$-cycle gluing of $H$ and $F_{1},F_{2},\ldots,F_{h}$,
and let $G^{\prime}=T_{s\, r}\, G$. If $\left\{ s,r\right\} \notin E\left(G\right)$
then the lemma is trivially true. Otherwise for $i\in\left\{ 1,\ldots,h\right\} $
let $B_{i}\cong\F_{m_{i}}$ be the block of $F_{i}$ which contains
the edge $\left\{ x_{i},x_{i+1}\right\} $. Since no $x_{i}$ is a
separation vertex of $F_{i-1}$ nor $F_{i}$, then $F_{i-1}\cup F_{i}$
has Dynkin type $\A_{n_{i-1}+n_{i}}$. Taking this into account, there
are four possibilities for $s$ and $r$:
\begin{enumerate}
\item Neither $s$ or $r$ are members of $V\left(H\right)$. This case
does not affect the cycle bigraph $H$, since $\left\{ s,r\right\} \in E\left(F_{i}\right)\setminus E\left(H\right)$
for some $i$, and therefore $G^{\prime}$ is the $\D$-cycle gluing
of $H$ and $F_{1},\ldots,F_{i-1},T_{s\, r}\, F_{i},F_{i+1},\ldots,F_{h}$.
\item Only $s$ is a member of $V\left(H\right)$. Assume $s=x_{i}$. We
prove that $T_{s\, r}$ moves $r$ from $F_{i-1}$ to $F_{i}$ or
from $F_{i}$ to $F_{i-1}$ without affecting $H$. Either $\left\{ s,r\right\} \in E\left(B_{i}\right)$
or $\left\{ s,r\right\} \in E\left(B_{i-1}\right)$. Without any loss
of generality let $\left\{ s,r\right\} \in B_{i-1}$, so that $m_{i-1}\ge3$.
Then by Corollary \ref{cor:joined_F_deflation} $T_{s\, r}\,\left(B_{i-1}\cup B_{i}\right)$
has the effect of “moving” the vertex $r$ from $B_{i-1}$ to $B_{i}$,
yielding the blocks $B_{i-1}^{\prime}\cong\F_{m_{i-1}-1}$ and $B_{i}^{\prime}\cong\F_{m_{i}+1}$.
This effectively translates to moving $r$ from $F_{i-1}$ to $F_{i}$
yielding some bigraphs $F_{i-1}^{\prime}$ and $F_{i}^{\prime}$.
Note that $\left\{ x_{i-1},x_{i}\right\} \in E\left(B_{i-1}^{\prime}\right)$
and $\left\{ x_{i},x_{i+1}\right\} \in E\left(B_{i-1}^{\prime}\right)$;
thus $G^{\prime}$ is the $\D$-cycle gluing of $H$ and $F_{1},\ldots,F_{i-2},F_{i-1}^{\prime},F_{i}^{\prime},F_{i+1},\ldots,F_{h}$.
\item Only $r$ is a member of $V\left(H\right)$. We prove that $T_{s\, r}$
grows the cycle bigraph by absorbing $s$ into it. Let $r=x_{i}$;
either $\left\{ s,r\right\} \in B_{i}$ or $\left\{ s,r\right\} \in B_{i-1}$.
Assume without loss of generality that $\left\{ s,r\right\} \in B_{i}$,
so that $m_{i}\ge3$. Let $\bar{B}_{i}$ be the subgraph of $F_{i}$
induced by $s$ and all neighbors of $s$ not in $V\left(B_{i}\right)$.
Then $\bar{B}_{i}\cong\F_{m_{i}^{\prime}}$ where $m_{i}^{\prime}\ge1$
with $m_{i}^{\prime}>1$ if and only if $s$ is a separation vertex
of $G$. By Corollary \ref{cor:joined_F_deflation} $T_{s\, r}\,\left(B_{i}\cup\bar{B}_{i}\right)=B_{i}^{\prime}\cup\bar{B}_{i}^{\prime}$
where $B_{i}^{\prime}\cong\F_{m_{i}-1}$, $\bar{B}_{i}^{\prime}\cong\F_{m_{i}^{\prime}+1}$,
$r\in V\left(\bar{B}_{i}^{\prime}\right)\setminus V\left(B_{i}^{\prime}\right)$,
and $s$ joins $B_{i}^{\prime}$ and $\bar{B}_{i}^{\prime}$. Then
$s$ is a separation vertex of $T_{s\, r}\, F_{i}$ which joins two
sub-bigraphs $F_{i}^{\prime}$ (the one containing $B_{i}^{\prime}$)
and $\bar{F}_{i}^{\prime}$ (the one containing $\bar{B}_{i}^{\prime}$).
By property 4 of Lemma \ref{lem:deflation_computation} we see that$\left\{ x_{i-1},r\right\} \in E\left(G^{\prime}\right)$,
so that $G^{\prime}$ contains the path $x_{i-1}\, r\, s\, x_{i+1}$
but not the edge $\left\{ r,x_{i+1}\right\} $. Notice that $\ls_{G}\left(\left\{ s,x_{i+1}\right\} \right)=\solid$
if and only if $\ls_{G^{\prime}}\left(\left\{ r,s\right\} \right)=\ls_{G^{\prime}}\left(\left\{ s,x_{i+1}\right\} \right)$.
Therefore, if we define $H^{\prime}$ as the cycle graph given by
the walk $x_{1}\, x_{2}\,\ldots\, x_{i}\, s\, x_{i+1}\,\ldots\, x_{h}\, x_{1}$
in $G^{\prime}$, then $H^{\prime}$ has an odd number of $\dotted$
edges if and only if $H$ does, and so $G^{\prime}$ is the $\D$-cycle
gluing of $H$ and $F_{1},\ldots,F_{i-1},F_{i}^{\prime},\bar{F}_{i}^{\prime},F_{i+1},\ldots,F_{h}$.
\item Both $s$ and $r$ are members of $V\left(H\right)$. Let $r=x_{i}$
and $s=x_{i+1}$. This case has the effect of shrinking $H$ by joining
$F_{i}$ and $F_{i+1}$ and removing $s$ from $H$. To see this,
notice first that if $V\left(H\right)=\left\{ s,r\right\} $ then
by property 1 of Lemma \ref{lem:deflation_computation} $T_{s\, r}\, G=G$,
therefore we may assume that $\left|V\left(H\right)\right|\ge3$.
Since two consecutive applications of $T_{s\, r}$ annihilate (i.e
$T_{s\, r}\, T_{s\, r}$ is the identity transformation) and since
$G^{\prime}$ of the previous case contains $\left\{ s,r\right\} \in V\left(H^{\prime}\right)$
we infer that $T_{s\, r}$ has the opposite effect as previously stated.
\qed
\end{enumerate}
\end{proof}
\begin{theorem}
\label{teo:Dn_iff_Dcyclegluing}Let $G$ be any bigraph with $n\ge4$
vertices. Then $G$ has Dynkin type $\D_{n}$ if and only if it is
a $\D$-cycle gluing.\end{theorem}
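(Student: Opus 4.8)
The plan is to mirror the proof of Theorem~\ref{teo:An_iff_Ablocktrees}, with Lemma~\ref{lem:T_preserves_Dcycle} in the role of Lemma~\ref{lem:TpreservesABT}; the only extra ingredient is one concrete positive-definite bigraph of Dynkin type $\D_n$ that is manifestly a $\D$-cycle gluing. For this I would take the $n$-cycle $C_n$ on vertices $x_1,\ldots,x_n$ with edges $\{x_i,x_{i+1}\}$ and exactly one $\dotted$ edge. On one hand $C_n$ is trivially a $\D$-cycle gluing: let $H=C_n$ (it carries an odd number, namely one, of $\dotted$ edges) and let each $F_i$ be the single edge $e_i=\{x_i,x_{i+1}\}$, which has Dynkin type $\A_2$ and whose endpoints are vacuously non-separating. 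On the other hand $\Dyn(C_n)=\D_n$: applying $T_{x_n\,x_1}$ turns the lone $\dotted$ edge $\solid$ and creates a $\dotted$ chord one step inward; iterating this rotation sheds one pendant vertex per step, and after $n-2$ inflations one is left with the standard $\D_n$ diagram (a degree-$3$ centre with two short legs and one long leg). I would verify this reduction in general by the bookkeeping of Lemma~\ref{lem:deflation_computation}.

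$(\Rightarrow)$ Suppose $\Dyn(G)=\D_n$. Since $C_n$ also has Dynkin type $\D_n$ and each $T_{s\,r}$ is an involution, Lemma~\ref{lem:flations_method} yields $G=T_{s_\ell\,r_\ell}\cdots T_{s_1\,r_1}\,C_n$ for some finite sequence. Now induct on $\ell$: the base case $\ell=0$ is the preceding paragraph, and the inductive step is precisely Lemma~\ref{lem:T_preserves_Dcycle}. Hence $G$ is a $\D$-cycle gluing.

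$(\Leftarrow)$ Let $G$ be the $\D$-cycle gluing of $H$ and $F_1,\ldots,F_h$ with $n=|V(G)|\ge 4$. First I would collapse every $F_i$ down to its single cycle edge $e_i$. While some vertex of $G$ lies off the cycle $H$, connectedness of $G$ provides such a vertex $s$ adjacent to a cycle vertex, say $s$ adjacent to $x_i$; because $x_i$ is not a separation vertex of $F_i$ it lies in just one block of $F_i$, namely the block $B_i$ containing $e_i$, so $\{s,x_i\}\in E(B_i)$ (or symmetrically $\{s,x_i\}\in E(B_{i-1})$), and case~3 of the proof of Lemma~\ref{lem:T_preserves_Dcycle} applies: $T_{s\,x_i}$ absorbs $s$ into the cycle, strictly decreasing the number of off-cycle vertices while, by Lemma~\ref{lem:T_preserves_Dcycle}, keeping $G$ a $\D$-cycle gluing. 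After finitely many such moves $G$ is a cycle bigraph $C$ on all $n$ vertices carrying an odd number of $\dotted$ edges. Finally, sign changes at vertices (conjugations by diagonal $\pm1$ matrices, hence $\Z$-equivalences; alternatively a short sequence of inflations that brings the $\dotted$ edges together in pairs) turn $C$ into the $n$-cycle with a single $\dotted$ edge, which by the first paragraph is $\Z$-equivalent to $\D_n$. Since $n\ge 4$, this gives $\Dyn(G)=\Dyn(C)=\D_n$.

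I expect the $(\Leftarrow)$ direction to be the main obstacle: one must make sure the ``absorb an off-cycle vertex'' move is available all the way until the cycle exhausts $V(G)$ (this rests on the observation that a non-separating cycle vertex of $F_i$ lies in the unique block $B_i$, so the edge it needs really is in the cycle-carrying block), and that an $n$-cycle with an odd number of $\dotted$ edges genuinely reduces to the standard $\D_n$ diagram --- the $\D_n$-counterpart of the reduction of $\F_n$ to $\A_n$ used at the close of Theorem~\ref{teo:An_iff_Ablocktrees}.
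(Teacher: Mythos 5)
Your proposal is correct and follows essentially the same route as the paper: the forward direction is the same induction via Lemma \ref{lem:flations_method} and Lemma \ref{lem:T_preserves_Dcycle}, and the converse is the same cycle-growing argument via case 3 of that lemma followed by a normalization to the single-$\dotted$ cycle and the chord-rotation sequence $T_{x_3\,x_1}\cdots T_{x_n\,x_1}$ down to $\D_n$. The only (harmless) differences are that you anchor the induction at the single-$\dotted$ $n$-cycle rather than at $\D_n$ itself (the paper instead exhibits $\D_n$ directly as a $\D$-cycle gluing with a two-edge cycle $H$ on $\{v_1,v_2\}$), and you phrase the reduction to one $\dotted$ edge as sign changes at vertices, which is exactly what the paper's composite inflation $I_{x_i}=T_{x_i\,x_{i+1}}T_{x_i\,x_{i-1}}$ implements.
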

\begin{svmultproof}
$\Rightarrow$) We prove this implication by induction on $k$ of
Lemma \ref{lem:flations_method}. For the base case, the $\D$-cycle
gluing of $G^{\left(0\right)}=\D_{n}$ is given by: \newcommand{\Drightside}{\begin{tikzpicture}
[baseline=(v3).base, scale = 1.0, line width=0.5pt, every node/.style={circle, inner sep=0pt}]
    \node (v2) at (0.13, -0.5) {$v_2$};
    \node (v3) at (1, 0) {$v_3$};
	\node (v4) at (2, 0) {$v_4$};
	\node (v5) at (5, 0) {$v_n$};
	\node (v1) at (0.13, 0.5) {$v_1$};
	\node[draw = none] (dots) at (3.5, 0) {$\ldots$};
	\draw (v2) -- (v3) -- (v4) -- (3, 0);
	\draw (v1) -- (v3);
	\draw (4, 0) -- (v5);
	\draw[dotted] (v2)--(v1);
\end{tikzpicture}}\newcommand{\Dcenterside}{\begin{tikzpicture}
[baseline=0.5, scale = 1.0, line width=0.5pt, every node/.style={circle, fill=white, inner sep=0pt}]
	\draw[dotted] (0,-0.5) arc [start angle=-90, end angle=90, radius=0.5];
	\draw (0,0.5) arc [start angle=90, end angle=270, radius=0.5];
    \node (v2) at (0, -0.5) {$v_2$};
	\node (v1) at (0, 0.5) {$v_1$};
\end{tikzpicture}}\newcommand{\Dleftside}{\begin{tikzpicture}
[baseline=0.5, scale = 1.0, line width=0.5pt, every node/.style={circle, inner sep=0pt}]
    \node (v2) at (0, -0.5) {$v_2$};
	\node (v1) at (0, 0.5) {$v_1$};
	\draw (v2)--(v1);
\end{tikzpicture}}

\[
F_{1}=\Dleftside\qquad H=\Dcenterside\qquad F_{2}=\Drightside
\]

For the induction step, let $G^{\left(k\right)}=T_{s\, r}\, G^{\left(k-1\right)}$
where $G^{\left(k-1\right)}$ is a $\D$-cycle gluing. Then by Lemma
\ref{lem:T_preserves_Dcycle} $G^{\left(k\right)}$ is also a $\D$-cycle
gluing.

$\Leftarrow$) Let $G$ be any $\D$-cycle gluing of $H$ and $F_{1},F_{2},\ldots,F_{h}$
and $n=\left|V\left(G\right)\right|\ge4$. If $\bigcup\left\{ F_{1},\ldots,F_{h}\right\} \ne H$
then there exists a vertex $s$ joined to some $x_{i}$ by an edge
$\left\{ s,x_{i}\right\} \in E\left(F_{i}\right)$ for some $i$.
Since $x_{i}$ is not a separation vertex of $F_{i}$, then $\left\{ s,x_{i+1}\right\} \in E\left(F_{i}\right)$.
Using case 3 of the proof of Lemma \ref{lem:T_preserves_Dcycle} repeatedly
we grow the cycle until we get to a bigraph $H^{\prime}$ which is
a $\D$-cycle gluing of $H^{\prime}$ itself and $F_{1},F_{2},\ldots,F_{n}$,
where each $F_{i}=\F\left[\left\{ x_{i}\right\} ,\left\{ x_{i+1}\right\} \right]$
if $\left\{ x_{i},x_{i+1}\right\} $ is $\solid$ in $H^{\prime}$,
and $F_{i}=\F\left[\left\{ x_{i},x_{i+1}\right\} ,\emptyset\right]$
otherwise. Notice that the transformation $I_{x_{i}}=T_{x_{i}\, x_{i+1}}T_{x_{i}\, x_{i-1}}$
has the effect of changing $\left\{ x_{i-1},x_{i}\right\} $ and $\left\{ x_{i},x_{i+1}\right\} $
to their opposite $\solid$/$\dotted$ line style. Since $H^{\prime}$
has an odd number of $\dotted$ edges, then applying $I_{x_{i}}$
whenever $\left\{ x_{i-1},x_{i}\right\} $ is $\dotted$ for $i=2,3,\ldots,n$
yields a cycle bigraph $H^{\prime\prime}$ where $\left\{ x_{n},x_{1}\right\} $
is the only $\dotted$ edge. Finally, let
\[
H^{\prime\prime\prime}=T_{x_{3}\, x_{1}}T_{x_{4}\, x_{1}}\,\ldots\, T_{x_{n}\, x_{1}}\, H^{\prime\prime}
\]
Then $H^{\prime\prime\prime}=\D_{n}$ with $x_{i}=v_{i}$.
\end{svmultproof}
We can easily prove a construction that resembles very much that of
\cite{Barot2001}. We start with the following lemma:
\begin{lemma}
\label{lem:An_shortest_paths}Let $G$ be a bigraph of Dynkin type
$\A_{n}$, then the shortest path between any pair of vertices $x$
and $y$ is unique. Moreover, it is $x\, s_{1}\, s_{2}\,\ldots\, s_{k}\, y$
where each $s_{i}$ is a separation vertex of $G$ in any simple path
$x\leadsto y$.\end{lemma}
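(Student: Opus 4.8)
The plan is to feed the hypothesis into Theorem~\ref{teo:An_iff_Ablocktrees}, so that $\BT(G)$ is an $\A$-block tree, and then to read the shortest $x$--$y$ path directly off the block tree (an alternative induction on the inflation sequence of Lemma~\ref{lem:flations_method} also works, but is longer). I would first record two trivial facts: every block $B\cong\F_m$ has complete underlying simple graph, so any two distinct vertices lying in a common block are joined by a unique edge; and consequently, if $x$ and $y$ lie in a common block then that edge is the unique shortest path and the claim holds with $k=0$. Throughout one uses that $G$ is connected, which follows from Lemma~\ref{lem:flations_method} together with the remark that each $T_{sr}$ preserves connectedness (the only edges $T_{sr}$ can delete have the form $\{i,r\}$ with $i$ a neighbor of $s$, and such an $i$ stays joined to $s$, which stays joined to $r$).

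Now suppose $x$ and $y$ lie in no common block. Since $\BT(G)$ is a tree there is a unique path in it from the node of $x$ to the node of $y$; let $s_1,\ldots,s_k$ be the separation vertices of $G$ occurring, in this order, as its internal nodes, and $B_0,B_1,\ldots,B_k$ the blocks occurring on it. By the alternating structure of $\BT(G)$, each consecutive pair in the list $x,s_1,s_2,\ldots,s_k,y$ lies in a common block ($x,s_1\in V(B_0)$, then $s_i,s_{i+1}\in V(B_i)$, finally $s_k,y\in V(B_k)$); since each $B_i$ is complete, $x\,s_1\,s_2\cdots s_k\,y$ is a path in $G$ of length $k+1$. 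Moreover $s_1,\ldots,s_k$ are precisely the separation vertices of $G$ that lie on every simple $x$--$y$ path, because deleting $s_i$ from $G$ separates $\{x,s_1,\ldots,s_{i-1}\}$ from $\{s_{i+1},\ldots,s_k,y\}$.

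Minimality and uniqueness then come together. Any $x$--$y$ path contains all $k$ distinct vertices $s_1,\ldots,s_k$, hence has at least $k+2$ vertices and length at least $k+1$; so $x\,s_1\cdots s_k\,y$ is shortest. If $P$ is an arbitrary shortest $x$--$y$ path, it has exactly $k+2$ vertices, so its vertex set is $\{x,s_1,\ldots,s_k,y\}$; walking along $P$ from $x$, the first vertex of this set encountered must be $s_1$, because $s_1$ separates $x$ from each of $s_2,\ldots,s_k,y$; deleting $x$ and repeating the argument, the vertices of $P$ occur in the order $s_1,s_2,\ldots,s_k,y$, i.e. $P=x\,s_1\cdots s_k\,y$.

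The point needing the most care is the third sentence of the second paragraph: identifying the internal separation vertices of the $\BT(G)$-path with the vertex cuts that genuinely separate $x$ from $y$ in $G$, and checking that deleting $s_i$ splits the two listed sets. This is the standard correspondence between the block tree and vertex connectivity (cf.\ \cite[sec.~5.2]{Bondy2008}), but it is where the argument has to be pinned down precisely; everything else is bookkeeping once the block tree is in hand.
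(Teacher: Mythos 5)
Your proof is correct and follows essentially the same route as the paper's: both read the path $x\,s_{1}\cdots s_{k}\,y$ off the block decomposition (the paper uses the bipartite incidence tree $T\left[V,\mathcal{B}\right]$ where you use $\BT\left(G\right)$ directly), invoke Theorem \ref{teo:An_iff_Ablocktrees} to conclude that every block is a complete $\F_{m}$ and hence supplies the needed edges, and observe that every simple $x$--$y$ path must pass through each $s_{i}$. Your last paragraph spells out the uniqueness argument (vertex count plus the forced order of the $s_{i}$) more explicitly than the paper, which stops at the containment observation, but the underlying idea is the same.
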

\begin{svmultproof}
Let $V$ and $\mathcal{B}$ be the vertex set and block set of $G$
respectively and define the bipartite graph $T\left[V,\mathcal{B}\right]$
where each vertex $u\in V$ is joined to $B\in\mathcal{B}$ whenever
$u\in V\left(B\right)$. Clearly $T$ is a tree and contains $\BT\left(G\right)$.
Therefore the path $x\leadsto y$ is unique in $T$ and has the form
$x\, B_{0}\, s_{1}\, B_{1}\, s_{2}\,\cdots\, s_{k}\, B_{k}\, y$ where
each $s_{i}$ is a separation vertex of $G$. Thus, any simple $x\leadsto y$
path must contain vertices $\left\{ x,s_{1},\ldots,s_{k},y\right\} $.
Also, by Theorem \ref{teo:An_iff_Ablocktrees}, each $B_{i}$ contains
edges $\left\{ s_{i-1},s_{i}\right\} $ where $s_{0}=x$ and $s_{k}=y$.
Therefore $x\, s_{1}\,\cdots\, s_{k-1}\, y$ is actually a simple
path of $G$ which vertices are contained in $\left\{ x,s_{1},\ldots,s_{k-1},y\right\} $.
\end{svmultproof}
For any bigraph $G$, let $\dist_{G}\left(u,v\right)$ denote the
\emph{distance} of $u$ and $v$, given by the number of edges of
the shortest path $u\leadsto v$ in $G$. The \emph{identification}
of two nonadjacent vertices $x$ and $y$ of a bigraph $G$ is the
operation of replacing these vertices by a single vertex incident
to all the edges which are incident to either $x$ or $y$. Denote
the result of identifying $x$ and $y$ in $G$ as $\nicefrac{G}{\left\{ x,y\right\} }$.
\begin{theorem}
A bigraph $G$ with $n\ge4$ vertices has Dynkin type $\D_{n}$ if
and only if there exists a bigraph $J$ of Dynkin type $\A_{n+1}$
with two non-separating vertices $u$ and $v$ such that $\dist_{J}\left(u,v\right)\ge2$,
the shortest path $u\leadsto v$ has an odd number of $\dotted$ edges,
and $G=\nicefrac{J}{\left\{ u,v\right\} }$.\end{theorem}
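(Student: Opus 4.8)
The plan is to reduce both implications to Theorem~\ref{teo:Dn_iff_Dcyclegluing}, so that it suffices to translate the identification construction into the language of $\D$-cycle gluings; along the way I would lean on Theorem~\ref{teo:An_iff_Ablocktrees} and Lemma~\ref{lem:An_shortest_paths}. For the ``only if'' direction I would start from a $\D$-cycle gluing presentation of $G$, which exists by Theorem~\ref{teo:Dn_iff_Dcyclegluing}: a cycle bigraph $H=(\{x_{1},\ldots,x_{h}\},\{e_{1},\ldots,e_{h}\})$ with an odd number of $\dotted$ edges together with bigraphs $F_{1},\ldots,F_{h}$ of Dynkin types $\A_{n_{1}},\ldots,\A_{n_{h}}$. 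I would then cut the cycle open at $x_{1}$: introduce a fresh vertex $v$, let $F_{h}'$ be the copy of $F_{h}$ in which $x_{1}$ is renamed to $v$, set $u=x_{1}$, and define $J=F_{1}\cup\cdots\cup F_{h-1}\cup F_{h}'$. Now $J$ is a \emph{path} gluing of the $F_{i}$ along the non-separating vertices $x_{2},\ldots,x_{h}$, so concatenating the $\A$-block trees of the $F_{i}$ shows that $\BT(J)$ is an $\A$-block tree (each $x_{i+1}$ lies in exactly one block of $F_{i}$ and one of $F_{i+1}$, hence has degree $2$), and since $|V(J)|=|V(G)|+1$, Theorem~\ref{teo:An_iff_Ablocktrees} gives $\Dyn(J)=\A_{n+1}$. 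Both $u$ and $v$ lie in a single block of $J$, so both are non-separating, and no block contains both, so $\dist_{J}(u,v)\ge 2$. Because $e_{i}\in E(F_{i})$ for every $i$, Lemma~\ref{lem:An_shortest_paths} identifies the shortest path $u\leadsto v$ as $x_{1}\,x_{2}\cdots x_{h}\,v$, whose edges carry exactly the line styles of $e_{1},\ldots,e_{h}$; hence it has an odd number of $\dotted$ edges. Finally, identifying $u$ with $v$ re-glues $F_{h}'$ onto $F_{1}$ and reconstitutes $\hat{H}$, so $G=\nicefrac{J}{\{u,v\}}$, as required.

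For the ``if'' direction I would instead carve the given $J$ into pieces realizing $G$ as a $\D$-cycle gluing. Write the shortest path as $u=s_{0},s_{1},\ldots,s_{k},s_{k+1}=v$ with $k=\dist_{J}(u,v)-1\ge 1$; by Lemma~\ref{lem:An_shortest_paths} each $s_{i}$ ($1\le i\le k$) is a separation vertex of $J$, the path in $\BT(J)$ from $u$ to $v$ reads $B_{0}\,s_{1}\,B_{1}\cdots s_{k}\,B_{k}$ with $\{s_{i},s_{i+1}\}\in E(B_{i})$, and, $\BT(J)$ being an $\A$-block tree, each $s_{i}$ has degree $2$. Deleting $s_{1},\ldots,s_{k}$ from $\BT(J)$ splits it into $k+1$ subtrees; let $\mathcal{B}_{i}$ be the one containing $B_{i}$ and put $F_{i+1}=\bigcup\mathcal{B}_{i}$, a connected subgraph of $J$. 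Then each $F_{i+1}$ has Dynkin type $\A$ (its block tree is a subtree of $\BT(J)$, hence an $\A$-block tree), contains the edge $\{s_{i},s_{i+1}\}$, has $s_{i}$ and $s_{i+1}$ as non-separating vertices (each sits in a single one of its blocks), and the $F_{i}$ overlap pairwise exactly as a $\D$-cycle gluing requires. Letting $w$ be the image of $u$ and $v$ under identification, renaming $u,v\mapsto w$ in $F_{1}$ and $F_{k+1}$, and letting $H'$ be the cycle on $w,s_{1},\ldots,s_{k}$ with the line styles inherited from $G$, one checks that $G=\nicefrac{J}{\{u,v\}}$ is precisely the $\D$-cycle gluing of $H'$ and $F_{1},\ldots,F_{k+1}$; the one clause needing care is that $H'$ has an odd number of $\dotted$ edges, which holds because the line styles of $H'$ agree with those along the shortest path $u\leadsto v$. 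Theorem~\ref{teo:Dn_iff_Dcyclegluing} then yields $\Dyn(G)=\D_{n}$.

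I expect the main obstacle to be this last direction: performing the block-tree surgery so that $F_{1},\ldots,F_{k+1}$ verify \emph{every} clause of the definition of a $\D$-cycle gluing, and in particular (i) transferring the parity of $\dotted$ edges from the shortest path to the cycle $H'$, and (ii) handling the boundary case $\dist_{J}(u,v)=2$ (i.e.\ $h=2$), where $F_{1}$ and $F_{2}$ share both $w$ and $s_{1}$ and where the parity hypothesis is exactly what makes the two resulting parallel edges between $w$ and $s_{1}$ cancel, so that the identification indeed produces a simple bigraph.
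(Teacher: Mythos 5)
Your proposal is correct and follows essentially the same route as the paper: both directions reduce to Theorem~\ref{teo:Dn_iff_Dcyclegluing} via Theorem~\ref{teo:An_iff_Ablocktrees} and Lemma~\ref{lem:An_shortest_paths}, cutting the cycle open at $x_{1}$ (renaming it to a fresh vertex in $F_{h}$) for the forward direction and splitting $J$ along the separation vertices of the shortest $u\leadsto v$ path for the converse. The only differences are presentational — you perform the block-tree surgery all at once where the paper peels off the $F_{i}$ iteratively, and you are in fact more explicit than the paper about the parity transfer and the $h=2$ boundary case where the two parallel $\{x_{1},x_{2}\}$ edges cancel.
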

\begin{svmultproof}
$\Rightarrow$) By Theorem \ref{teo:Dn_iff_Dcyclegluing} say that
$G$ is a $\D$-cycle-gluing of $H$ and $F_{1},\ldots,F_{h}$. Taking
the union of $F_{1},F_{2},\ldots,F_{h-1},F_{h}^{\prime}$ where $F_{h}^{\prime}$
is the result of replacing $x_{1}$ by a new vertex $x_{h+1}$ in
$F_{h}$ yields a bigraph $J$ with $n+1$ vertices. From the definition
of a $\D$-cycle gluing follows that each $x_{i}$ is a separation
vertex of $J$ for $i\in\left\{ 2,\ldots,h\right\} $ and that $u=x_{1}$
and $v=x_{h+1}$ are non-separating vertices of $J$. By Theorem \ref{teo:An_iff_Ablocktrees}
we infer that $J$ has Dynkin type $\A_{n+1}$. By Lemma \ref{lem:An_shortest_paths},
the shortest path $x_{1}\leadsto x_{h+1}$ in $J$ is precisely $x_{1}\, x_{2}\,\ldots\, x_{h}\, x_{h+1}$,
which by definition of $H$ has length $d_{J}\left(x_{1},x_{h+1}\right)\ge2$
and an odd number of $\dotted$ edges. Finally by the very definition
of $x_{h+1}$, identifying $x_{1}=x_{h+1}$ yields $G$.

$\Leftarrow$) Say that the shortest path $P$ from $u$ to $v$ is
$u=x_{1}\, x_{2}\,\ldots\, x_{h}\, x_{h+1}=v$. By Lemma \ref{lem:An_shortest_paths}
we see that $x_{i}$ is a separation vertex for $i\in\left\{ 2,\ldots,h\right\} $.
Let $J_{1}=J$. By Theorem \ref{teo:An_iff_Ablocktrees} $x_{i+1}$
separates $J_{i}$ into two subgraphs $F_{i}$ and $J_{i+1}$ where
$\left\{ x_{i},x_{i+1}\right\} \in E\left(F_{i}\right)\setminus E\left(J_{i+1}\right)$
for $i=1,2,\ldots,h-1$. Let $F_{h}=J_{h}$, $x_{h+1}=x_{1}$ and
$H=\nicefrac{P}{\left\{ x_{i+1},x_{1}\right\} }$. Then by construction
$G=\nicefrac{J}{\left\{ u,v\right\} }$ is a $\D$-cycle gluing of
$H$ and $F_{1},F_{2},\ldots,F_{h}$. Therefore by Theorem \ref{teo:Dn_iff_Dcyclegluing},
$G$ has Dynkin type $\D_{n}$.
\end{svmultproof}
\appendix
\bibliographystyle{spmpsci}
\phantomsection\addcontentsline{toc}{section}{\refname}\bibliography{qcartan_sym}

\end{document}